\newtheorem{proposition}{Proposition}
\newtheorem{lemma}{Lemma}
\newtheorem{theorem}{Theorem}
\newtheorem*{proof}{Proof}
\newtheorem{remark}{Remark}
\title{Critical Points, Stability, and Basins of Attraction of Three Kuramoto Oscillators with Isosceles Triangle Network}
 \author[a,b]{Xiaoxue Zhao}
 \author[a]{Xiang Zhou \thanks{Corresponding author: xizhou@cityu.edu.hk}}
 \affil[a]{School of Data Science and Department of Mathematics, City University of Hong Kong, Hong Kong SAR}
 \affil[b]{School of Mathematics, Harbin Institute of Technology, Heilongjiang, China}
 \date{}
\begin{document}

\maketitle

\begin{abstract}
    We investigate  the Kuramoto model with three oscillators   interconnected by an isosceles triangle network. The characteristic of this model is that the coupling connections between the oscillators can be either attractive or repulsive. We list all critical points and investigate their stability. We furthermore present a framework studying convergence towards stable critical points under special coupled strengths.
    The main tool is the linearization and the monotonicity arguments of oscillator diameter. \\
    \par \textbf{Keywords:} Kuramoto model; critical point; stability; basin of attraction.
\end{abstract}


{\section{Introduction}}
Synchronized collective phenomena are extensively studied in complex physical and biological systems.
One of the most successful models in the interpretation of the phenomena is the Kuramoto model \cite{kuramoto1984chemical}, which has a wide range of applications \cite{antonsen2008external,li2020synchronization}. 
{Flame flickering is a common phenomenon observed in combustion systems, where flames exhibit oscillatory behavior due to various instabilities. Understanding and controlling these oscillations are crucial for improving combustion efficiency and reducing emissions.
The Kuramoto model can be applied to flame flickering dynamics by modeling the flames as coupled oscillators\cite{chi2024synchronization, kitahata2009oscillation,gergely2020flickering}. Each flame element is  considered an oscillator with   the interactions   modeled as coupling terms. 
We are interested   in using the Kuramoto model    as a powerful theoretic framework for understanding the synchronization and phase dynamics of flame flickering.}

{The literature \cite{chi2024synchronization} has conceptually proposed 
 the Kuramoto model that involves three oscillators interconnected through an isosceles triangle network:}
\begin{equation}\label{mod1}
    \left\{\begin{array}{cc}
       \dot{\theta}_1(t)=\frac{K_2}{3}\sin(\theta_3(t)-\theta_1(t))+\frac{K_1}{3}\sin(\theta_2(t)-\theta_1(t))\\
       \dot{\theta}_2(t)=\frac{K_2}{3}\sin(\theta_3(t)-\theta_2(t))+\frac{K_1}{3}\sin(\theta_1(t)-\theta_2(t))\\
       \dot{\theta}_3(t)=\frac{K_2}{3}\sin(\theta_1(t)-\theta_3(t))+\frac{K_2}{3}\sin(\theta_2(t)-\theta_3(t)).
    \end{array}
    \right.
\end{equation}
{The flame states   are modelled as oscillators $\theta_i(t)\in\mathbb{R}, i=1,2,3$. Here the coupling strengths between $\theta_1$-$\theta_3$ and between $\theta_2$-$\theta_3$ are designed as the same $K_2$, so that all three-body coupling strengths are assumed by only two nonzero real numbers  $K_1$ and $K_2$. }
Our interest lies in the theoretical analysis of the synchronous dynamics of this system. 
It is evident that if $\Theta(t)=(\theta_1(t),\theta_2(t),\theta_3(t))$ is a solution of \eqref{mod1}, then for any constant $c\in\mathbb{R}$, $\Theta(t)+c(1,1,1)$ is also a solution. This is called translational invariance of the solution.

The dynamical system \eqref{mod1} is a gradient system $\dot{\Theta}=-\nabla V(\Theta)$
with the energy function {$V(\Theta)=-\frac{1}{3}[K_2\cos(\theta_3-\theta_1)+K_1\cos(\theta_2-\theta_1)+K_2\cos(\theta_2-\theta_3)].$}
It is easy to see that $\nabla V(\Theta)$ is continuous and $  \nabla^2V(\Theta) $ is continuous and uniformly bounded on $\mathbb{R}^3$, then $\nabla V(\Theta)$ is globally Lipschitz in $\Theta$ on $\mathbb{R}^3$ for any time $t\ge 0$. Hence, this gradient system with any initial data $\Theta(0)$ has a unique solution over $[0,\infty)$. 
Calling $\Theta^*\in\mathbb{R}^3$ \textit{a critical point} of system \eqref{mod1} refers to $\nabla V(\Theta^*)=0$.
Li and Xue \cite[Theorem 1.1]{li2019convergence} have proved that $\lim_{t\to\infty}\Theta(t)=\Theta^*\in\{\Theta^*\mid \nabla V(\Theta^*)=0\}$ and $\lim_{t\to\infty}\dot{\Theta}(t)=0$ for any initial data $\Theta(0)$. 

There have been some similar studies on system \eqref{mod1}, see 
\cite{ha2012basin,rogge2004stability,zhao2018formation}. 
These results assume identical and positive coupling strengths $(K_1=K_2>0)$. 
However, for flame dynamics, there is competition for limited resources among combustion units, such as oxygen or combustible material. This means that the interactions between these units can lead to negative coupling forces. The study of negative coupling forces can restrict the propagation speed, spread range, or combustion efficiency of a flame.
As far as we know, there is no existing study on the dynamics \eqref{mod1} with non-identical and negative coupling strengths, which is the main focus of our research.


\textbf{Contributions.} 
In this paper, we will perform rigorous analysis for system \eqref{mod1} and the main results are twofold. Firstly, we compute all critical points and analyze their stability, based on the linearization (see Lemma \ref{lem1-existence} and Proposition \ref{lem1-stability}).
Our results indicate that for certain coupling strengths, stable critical points can coexist, implying richer dynamical properties compared to those described in reference \cite{zhao2018formation}.
Secondly, we analyze the basins of attraction of critical points. Specifically, we focus on a special case: $K_1=-K_2<0$, where two stable critical points coexist. We provide a framework for determining the basins of attraction of these two stable critical points using novel inequalities (see Theorem \ref{Thm5Star}). 

\textbf{Organization of paper.} In Section \ref{sec2}, we identify the formation and stability of critical points  of system \eqref{mod1}. In Section \ref{sec3}, we prove the convergence of the solution and give estimates on the basins of critical points. Section \ref{sec4} is devoted to be a brief summary of this paper.

{\section{Critical points and stability }\label{sec2}}
This section concerns the formation and stability of  critical points, which plays a central role in systems theory. 
The following lemma lists all critical points in the range of $[0,2\pi)^3$ for system \eqref{mod1}. 
The critical points discussed here are considered in an equivalence class sense. In other words, if $\Theta^*$ is a critical point of the system \eqref{mod1}, then $\Theta^*+2L\pi, L\in\mathbb{R}^3$ is also a critical point. This will not be reiterated further.
\begin{lemma}\label{lem1-existence}
   For any $K_1,K_2\in\mathbb{R}\setminus\{0\}$, there are four critical points of system \eqref{mod1}:
   \[ \Theta^*_1=(0,\pi,0),\quad \Theta^*_2=(0,\pi,\pi),\quad \Theta^*_3=(0,0,0),\quad \Theta^*_4=(0,0,\pi). \]
   In addition, if $|\frac{K_2}{2K_1}|\le 1$, then there are two more critical points:
   \[\Theta^*_5=(0,2\arccos(-\frac{K_2}{2K_1}),\arccos(-\frac{K_2}{2K_1})),\]
   and
   \[\Theta^*_6=(0,2\pi-2\arccos(-\frac{K_2}{2K_1}),2\pi-\arccos(-\frac{K_2}{2K_1})).\]
\end{lemma}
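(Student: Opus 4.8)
\noindent\emph{Proof sketch.} The plan is to use the translational invariance observed above to normalise $\theta_1 = 0$ and then solve $\nabla V(\Theta^*) = 0$ head-on. Setting $x := \theta_2$, $y := \theta_3$ with $x, y \in [0,2\pi)$, the critical point condition is that the three right-hand sides of \eqref{mod1} vanish. Because each pairwise coupling term enters the system once with each sign, one has $\dot{\theta}_1 + \dot{\theta}_2 + \dot{\theta}_3 \equiv 0$ (equivalently $\nabla V \cdot (1,1,1) = 0$, since $V$ depends only on phase differences), so only two of the three equations are independent. I would retain those coming from $\dot{\theta}_1 = 0$ and $\dot{\theta}_3 = 0$, i.e.
\[
K_1 \sin x + K_2 \sin y = 0, \qquad \sin(x - y) = \sin y,
\]
where in the second equation I have cancelled the nonzero factor $K_2$.

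The first step is to solve the second equation. Over $[0,2\pi)$ modulo $2\pi$ its solution set splits into exactly two branches: $x \equiv 2y$ and $x \equiv \pi$. Plugging $x = \pi$ into the first equation gives $K_2 \sin y = 0$, hence $\sin y = 0$ and $y \in \{0,\pi\}$; this yields $\Theta^*_1 = (0,\pi,0)$ and $\Theta^*_2 = (0,\pi,\pi)$. Plugging $x = 2y$ and using $\sin 2y = 2\sin y\cos y$, the first equation factors as $\sin y\,(K_2 + 2K_1\cos y) = 0$. From $\sin y = 0$ we again get $y \in \{0,\pi\}$, and reducing $x = 2y$ modulo $2\pi$ produces $\Theta^*_3 = (0,0,0)$ and $\Theta^*_4 = (0,0,\pi)$. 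The remaining factor gives $\cos y = -\frac{K_2}{2K_1}$, which is solvable exactly when $\left|\frac{K_2}{2K_1}\right| \le 1$; in that case $y \in \{\arccos(-\frac{K_2}{2K_1}),\, 2\pi - \arccos(-\frac{K_2}{2K_1})\}$, and reducing $x = 2y$ back into $[0,2\pi)$ gives precisely $\Theta^*_5$ and $\Theta^*_6$ as stated.

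To finish I would verify consistency: every candidate found satisfies the discarded equation $\dot{\theta}_2 = 0$ automatically, since the three equations sum to zero; and I would record that the six points are pairwise distinct in $[0,2\pi)^3$ except on the boundary $\left|\frac{K_2}{2K_1}\right| = 1$, where $\Theta^*_5$ and $\Theta^*_6$ degenerate onto $\Theta^*_3$ or $\Theta^*_4$.

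I do not anticipate a genuine obstacle: the whole argument is an elementary, if bookkeeping-heavy, trigonometric case analysis. The two points to handle with care are (i) the translation of $\sin(x-y) = \sin y$ into the clean pair of branches $x \equiv 2y$, $x \equiv \pi \pmod{2\pi}$ and the subsequent reduction of $x = 2y$ into $[0,2\pi)$, which is what makes the formulas for $\Theta^*_5$ and $\Theta^*_6$ look asymmetric; and (ii) keeping the sub-case $\sin y = 0$ separate so that no solutions are lost when one divides the first equation by $\sin y$.
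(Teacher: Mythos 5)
Your proof is correct and is essentially the argument the paper has in mind: the paper's own proof is just the one-line remark that the critical points follow from setting the right-hand side of \eqref{mod1} to zero, and your normalisation $\theta_1=0$, reduction to the two independent equations $K_1\sin x+K_2\sin y=0$ and $\sin(x-y)=\sin y$, and the branch/case analysis is the straightforward way to carry that out. No gaps; the only addition you make beyond the paper is the (correct) bookkeeping about the two branches of $\sin(x-y)=\sin y$ and the degeneration of $\Theta^*_5,\Theta^*_6$ at $\left|\tfrac{K_2}{2K_1}\right|=1$.
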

\begin{proof}
    The results can be easily obtained by setting the right-hand side of system \eqref{mod1} to zero.
\end{proof}

The proposition below gives the stability of critical points, based on linearization.
\begin{proposition}\label{lem1-stability}
   The linear stability of critical points $\{\Theta_i^*\}_{i=1}^{6}$ of system \eqref{mod1} are as follows $(\Theta^*_i,i=5,6$ only exist under the condition $|\frac{K_2}{2K_1}|\le 1)$:
   \begin{enumerate}
       \item[$(1)$] $\Theta^*_1$ and $\Theta^*_2$ are unstable for any $K_1,K_2\in\mathbb{R}\setminus\{0\}$;
       \item[$(2)$] $\Theta^*_3$ is stable and all others are unstable if $K_2>0$ and $2K_1+K_2>0$; 
      \item[$(3)$]  $\Theta^*_4$ is stable and all others are unstable if $K_2<0$ and $2K_1-K_2>0$;
        \item[$(4)$] $\Theta^*_5$ and $\Theta^*_6$ are stable and all four others are unstable if  $2K_1+K_2<0$ and $2K_1-K_2<0$.
   \end{enumerate}
\end{proposition}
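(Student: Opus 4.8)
The plan is a standard linearization analysis, using crucially that \eqref{mod1} is the gradient flow $\dot\Theta=-\nabla V(\Theta)$: the Jacobian of the vector field at a critical point $\Theta^*$ is the symmetric matrix $-\nabla^2 V(\Theta^*)$, so its spectrum is real, and (because translational invariance forces $0$ to be an eigenvalue with eigenvector $(1,1,1)^\top$) linear stability of $\Theta^*$ is equivalent to the other two eigenvalues being negative.

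Denoting the right-hand side of \eqref{mod1} by $f(\Theta)$ and setting
\[ p=K_1\cos(\theta_2^*-\theta_1^*),\qquad q=K_2\cos(\theta_3^*-\theta_1^*),\qquad r=K_2\cos(\theta_3^*-\theta_2^*),\]
a direct differentiation yields $3\,Df(\Theta^*)=-L$, where
\[ L=\begin{pmatrix} p+q & -p & -q\\ -p & p+r & -r\\ -q & -r & q+r\end{pmatrix}\]
is the weighted graph Laplacian of the triangle with edge weights $p,q,r$. Thus the two eigenvalues of $Df(\Theta^*)$ apart from $0$ are $-\mu_1/3$ and $-\mu_2/3$, where $\mu_1,\mu_2$ are the nonzero eigenvalues of $L$; since $0$ is an eigenvalue of $L$, one has $\mu_1+\mu_2=\operatorname{tr}L=2(p+q+r)$ while $\mu_1\mu_2$ equals the sum of the three $2\times2$ principal minors of $L$, each of which is easily checked to equal $pq+pr+qr$, so $\mu_1\mu_2=3(pq+pr+qr)$. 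Consequently $\Theta^*$ is linearly stable if and only if $p+q+r>0$ and $pq+pr+qr>0$, and it is unstable as soon as $p+q+r<0$ or $pq+pr+qr<0$.

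It remains to substitute the six critical points of Lemma \ref{lem1-existence} and simplify. For $\Theta^*_1$ and $\Theta^*_2$ one gets $pq+pr+qr=-K_2^2<0$, giving (1). For $\Theta^*_3=(0,0,0)$ one has $(p,q,r)=(K_1,K_2,K_2)$, hence $p+q+r=K_1+2K_2$ and $pq+pr+qr=K_2(2K_1+K_2)$; an elementary sign analysis shows both are positive exactly when $K_2>0$ and $2K_1+K_2>0$. For $\Theta^*_4=(0,0,\pi)$ one has $(p,q,r)=(K_1,-K_2,-K_2)$, $p+q+r=K_1-2K_2$, $pq+pr+qr=-K_2(2K_1-K_2)$, positive exactly when $K_2<0$ and $2K_1-K_2>0$. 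For $\Theta^*_5$ — and for $\Theta^*_6$, which has the same triple $(p,q,r)$ because cosine is even — using $\cos\alpha=-K_2/(2K_1)$ and $\cos 2\alpha=2\cos^2\alpha-1$ one computes $p+q+r=-(2K_1^2+K_2^2)/(2K_1)$ and $pq+pr+qr=K_2^2(4K_1^2-K_2^2)/(4K_1^2)$, and both are positive exactly when $K_1<0$ and $(2K_1+K_2)(2K_1-K_2)>0$, which is equivalent to the pair $2K_1+K_2<0$, $2K_1-K_2<0$ of (4).

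The ``all others are unstable'' clauses then follow by inspection: in each of the regimes (2), (3), (4) one of the inequalities just derived fails for every remaining critical point (for instance $K_2>0$ forbids stability of $\Theta^*_4$, and $2K_1+K_2>0$ forbids that of $\Theta^*_5,\Theta^*_6$, and so on), and one notes that the regime of (4) is consistent with $|K_2/(2K_1)|\le1$, so $\Theta^*_5,\Theta^*_6$ do exist there. I do not anticipate a genuine obstacle; the one point needing care is the sign bookkeeping in the chain $Df=-\nabla^2V=-\tfrac13L$, so that ``linearly stable'' correctly translates into ``the Laplacian $L$ is positive semidefinite'', together with checking that the parameter regimes in (1)--(4) are handled consistently relative to the existence condition for $\Theta^*_5$ and $\Theta^*_6$.
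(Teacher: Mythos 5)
Your proposal is correct and reaches exactly the conclusions of the paper; the underlying strategy is the same (linearize at each critical point, discard the zero eigenvalue coming from translation invariance, and check the signs of the remaining two eigenvalues). The difference is purely in execution: the paper exhibits the two nontrivial eigenvalues of the Jacobian explicitly for each critical point (e.g.\ $\tfrac{K_1\pm\sqrt{K_1^2+3K_2^2}}{3}$ for $\Theta^*_1,\Theta^*_2$, and $-\tfrac{2K_1+K_2}{3}$, $-K_2$ for $\Theta^*_3$, etc.), whereas you exploit the weighted-graph-Laplacian structure $3\,Df(\Theta^*)=-L$ and read off the signs from the symmetric functions $\mu_1+\mu_2=2(p+q+r)$ and $\mu_1\mu_2=3(pq+pr+qr)$. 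Your formulas are consistent with the paper's (I checked that, e.g., the sum/product of the paper's eigenvalues for $\Theta^*_1$ match $-2K_1/3\cdot(-1)$ and $-K_2^2/3$), and your route has the advantage of avoiding explicit eigenvalue and eigenvector computations while making the ``$pq+pr+qr<0$ forces a positive eigenvalue'' instability mechanism transparent. One small point to tighten: the ``all others are unstable'' clauses need the relevant symmetric function to be \emph{strictly} negative in each regime (mere failure of the stability criterion could in principle leave a degenerate zero eigenvalue); this does hold in every case --- for instance for $\Theta^*_4$ in regime (2) one cannot have both $K_1\ge 2K_2$ and $K_1\le K_2/2$ with $K_2>0$, so one of $p+q+r$, $pq+pr+qr$ is strictly negative --- but it deserves the explicit check rather than ``by inspection''.
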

\begin{proof}
 The Jacobian matrix $J_{\Theta^*}$ at $\Theta^*=(\theta^*_1,\theta^*_2,\theta^*_3)$ is
\[J_{\Theta^*}=\frac{1}{3}\begin{bmatrix}
* & K_1\cos(\theta^*_2-\theta^*_1)& K_2\cos(\theta^*_3-\theta^*_1)\\
K_1\cos(\theta^*_1-\theta^*_2) & * & K_2\cos(\theta^*_3-\theta^*_2)\\
K_2\cos(\theta^*_1-\theta^*_3)& K_2\cos(\theta^*_2-\theta^*_3) & *
\end{bmatrix}\]
where $J_{ii}=-\sum_{j=1,j\neq i}^{3}J_{ij}$, are marked in ``$*$''. $J_{\Theta^*}$ has a simple eigenvalue $0$, due to the shift invariance. So, as long as the remaining two eigenvalues are negative, the critical point $\Theta^*$ is stable.The calculations of following eigenvalues and eigenvectors are trivial verification.
 \begin{enumerate}
    \item[$(a)$] The eigenvalues and corresponding eigenvectors of $\Theta^*_1$ and $\Theta^*_2$ are
    $\lambda_1=0,$ $ \lambda_2=\frac{K_1+\sqrt{K_1^2+3K^2_2}}{3},$ $ \lambda_3=\frac{K_1-\sqrt{K_1^2+3K^2_2}}{3},$
    and 
    \[\nu_1=\begin{bmatrix}
        1\\1\\1
    \end{bmatrix},\quad \nu_2=\begin{bmatrix}
        -\frac{K_1+\sqrt{K_1^2+2K^2_2}}{K_2}-1\\ \frac{K_1+\sqrt{K_1^2+2K^2_2}}{K_2}-1\\2
    \end{bmatrix},\quad \nu_3=\begin{bmatrix}
        -\frac{K_1-\sqrt{K_1^2+2K^2_2}}{K_2}-1\\ \frac{K_1-\sqrt{K_1^2+2K^2_2}}{K_2}-1\\2
    \end{bmatrix}. \]
    \item[$(b)$] The eigenvalues and corresponding eigenvectors of  $\Theta^*_3$ are  $\lambda_1=0,\lambda_4=-\frac{2K_1+K_2}{3},$ $ \lambda_5=-K_2,$
    and 
    $\nu_1=\begin{bmatrix}
        1\; 1\; 1
    \end{bmatrix}^{\mathrm{T}},$ $ \nu_4=\begin{bmatrix}
        -1\; 1\; 0
    \end{bmatrix}^{\mathrm{T}},$ $ \nu_5=\begin{bmatrix}
       -\frac{1}{2}\; -\frac{1}{2}\; 1
    \end{bmatrix}^{\mathrm{T}}. $
    \item[$(c)$] The eigenvalues and corresponding eigenvectors of $\Theta^*_4$ are
    $\lambda_1=0,$ $ \lambda_6=\frac{K_2-2K_1}{3},$ $ \lambda_7=K_2,   $
     and $\nu_1,\nu_4,\nu_5$.
     \item[$(d)$] The eigenvalues and corresponding eigenvectors of $\Theta^*_5$ and $\Theta^*_6$ are
    $\lambda_1=0,$ $ \lambda_8=\frac{4K_1^2-K_2^2}{6K_1},$ $ \lambda_9=\frac{K_2^2}{2K_1},   $
    and $\nu_1,\nu_4,\nu_5$.
\end{enumerate}
\end{proof}


Figure \ref{Fig1} demonstrates the above results. 
\begin{figure}[H]
    \centering
     \subfigure[$K_1>0$]{\includegraphics[
     width=0.4\textwidth
     ]{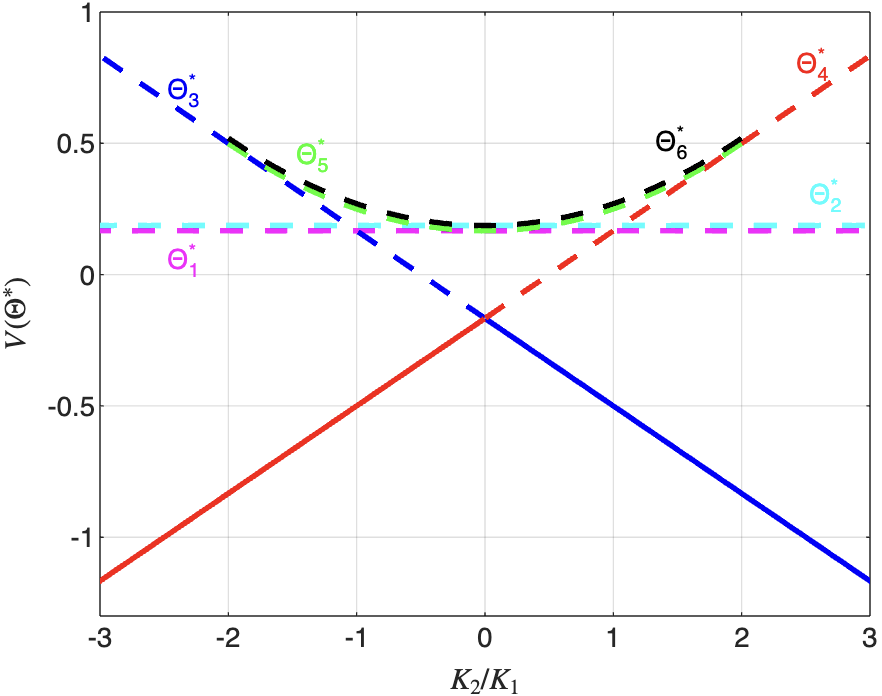}}
    \subfigure[$K_1<0$]{\includegraphics[ width=0.4\textwidth]{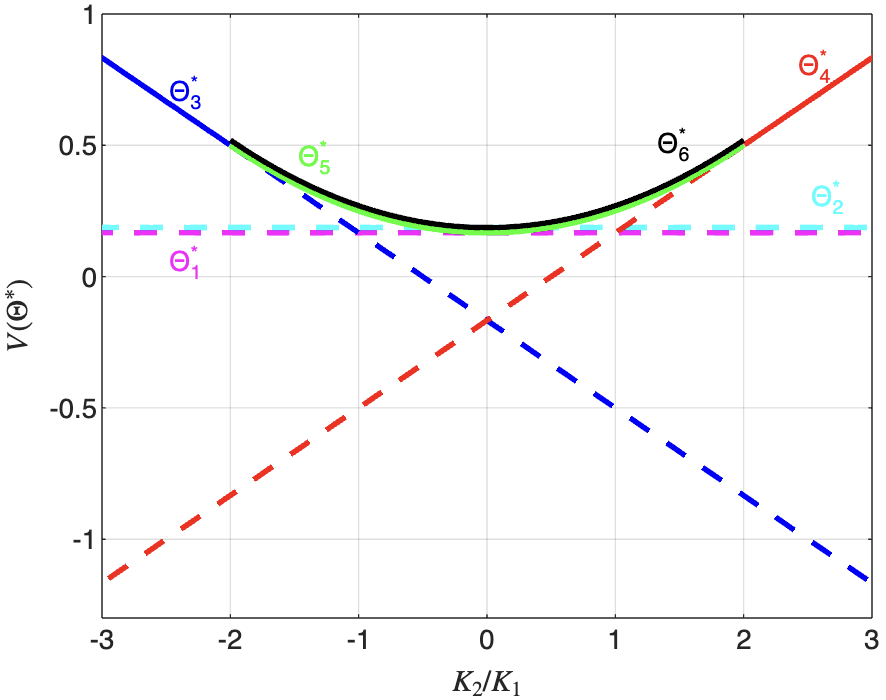}}
    \vspace{-0.3cm}
    \caption{ The six critical points $\{\Theta^*_i\}_{i=1}^6$ (marked in six different colors)
    with dependence on the value of coupling strengths. 
The plot is  the potential function $V$ of the critical points in terms of the ratio $K_2/K_1$. 
The stability of each critical point 
is depicted by the pattern of the lines:
the solid lines represent stability and the dashed lines represent instability. }
    \label{Fig1}
\end{figure}

{\section{Analysis of basins of attraction}\label{sec3}}
From Figure \ref{Fig1}, the critical points $\Theta^*_1$ and $\Theta^*_2$ are unstable for any $K_1$ and $K_2$, so the basins of attraction are Lebesgue zero-measure sets. {For $(K_1,K_2)\in \left((0,\infty)\times(0,\infty)\right)\cup \left((-\infty,0)\times (-2K_1,\infty)\right)$, $\Theta^*_3$ is the only stable point and   the solution of system \eqref{mod1} starting from almost any initial value   will   converge to   $\Theta^*_3$. The analysis for $\Theta^*_4$ is the same as  that of $\Theta^*_3$ under the condition $(K_1,K_2)\in \left((0,\infty)\times(-\infty,0)\right)\cup \left((-\infty,0)\times (-\infty,2K_1)\right)$.}
For $(K_1,K_2)\in (-\infty,0)\times(2K_1,-2K_1),$ the critical points $\Theta^*_5$ and $\Theta^*_6$ are both stable, while all the others are unstable. 

{We  will focus on the theoretic analysis of basins of attraction of co-existing stable points $\Theta^*_5$ and $\Theta^*_6$ when $(K_1,K_2)\in (-\infty,0)\times(2K_1,-2K_1)$, i.e., $K_1<0$ and $|K_2|< 2|K_1|$. 
 We recognize that the ratio $|K_2|/|K_1|$
  might  lead to substantial alterations in the shape and size of basins of attraction, making it difficult to accurately estimate by theory.
However, 
utilizing the phase diameter function approach, a sufficient condition for the initial condition within one of the basins is available, provided that a specific assumption of $K_1=-K_2$ is met. 
 On this assumption, 
$$\Theta^*_5=(0, \frac{2\pi}{3},\frac{\pi}{3}), \quad \Theta^*_6=(0, \frac{4\pi}{3},\frac{5\pi}{3}).$$ 
 Our primary analytical result is Theorem \ref{Thm5Star}, derived in the content that follows. In the absence of the $K_1=-K_2$ assumption, there are difficulties within this approach because the estimate in Lemma \ref{lemderi} becomes  nontrivial or perhaps impossible.
}

We   consider $\Theta^*_5$ first since the case for $\Theta^*_6$ is similar. 
Performing the translation $\tilde{\Theta}:=\Theta-\Theta^*_5$, i.e.,
$\tilde{\theta}_1=\theta_1,\tilde{\theta}_2=\theta_2-\frac{2\pi}{3},\tilde{\theta}_3=\theta_3-\frac{\pi}{3},$
we can rewrite the system \eqref{mod1} as
\begin{equation}\label{equtil}
    \left\{\begin{array}{cc}
       \dot{\tilde{\theta}}_1(t)=\frac{K_2}{3}\sin(\tilde{\theta}_3(t)-\tilde{\theta}_1(t)+\frac{\pi}{3})-\frac{K_1}{3}\sin(\tilde{\theta}_2(t)-\tilde{\theta}_1(t)-\frac{\pi}{3})\\
       \dot{\tilde{\theta}}_2(t)=\frac{K_2}{3}\sin(\tilde{\theta}_3(t)-\tilde{\theta}_2(t)-\frac{\pi}{3})-\frac{K_1}{3}\sin(\tilde{\theta}_1(t)-\tilde{\theta}_2(t)+\frac{\pi}{3})\\
       \dot{\tilde{\theta}}_3(t)=\frac{K_2}{3}\sin(\tilde{\theta}_1(t)-\tilde{\theta}_3(t)-\frac{\pi}{3})+\frac{K_2}{3}\sin(\tilde{\theta}_2(t)-\tilde{\theta}_3(t)+\frac{\pi}{3})
    \end{array}
    \right.
\end{equation}
and the problem at hand is transformed into estimating an attraction domain of $(0,0,0)$.

The challenge in studying system \eqref{equtil} lies in the fact that the signs before $\frac{\pi}{3}$, referred to as the phase-lag or frustration \cite{hsia2020synchronization,ha2014asymptotic,ha2014large}, can be both positive and negative. There are studies that focus on two situations: no frustration and consistently positive frustration\cite{ha2012basin,hsia2020synchronization,ha2014asymptotic,zhao2018formation,ha2014large}. The commonly used method is to construct a phase diameter function, estimate its derivative, and apply the Gr\"{o}nwall's inequality to prove that the phase diameter function will exponentially decay to zero. This paper adopts the same approach, but we provide new estimation inequalities for the frustration. 
Consider the phase diameter function defined by
$$\mathcal{D} (\tilde\Theta(t)):=\max_{i,j=1,2,3}(\tilde\theta_i(t)-\tilde\theta_j(t)),$$ 
which is non-negative, continuous and piece-wise differentiable with respect to time $t$. {Let $M_t=\operatorname{argmax}\{\dot{\tilde\theta}_{i}(t)\mid i\in \underset{l=1,2,3}{\operatorname{argmax}}~\tilde{\theta}_l\}$ and $m_t=\operatorname{argmin}\{\dot{\tilde\theta}_{i}(t)\mid i\in \underset{l=1,2,3}{\operatorname{argmin}}~\tilde{\theta}_l\}$, then 
by \cite[Lemma 2.2]{lin2007state}, the upper Dini derivative $D^+\mathcal{D}(\tilde\Theta(t))$ along the system \eqref{equtil} is given by
\[ D^+\mathcal{D}(\tilde\Theta(t)):=\limsup_{h\downarrow 0}\frac{\mathcal{D}(\tilde\Theta(t+h))-\mathcal{D}(\tilde\Theta(t))}{h}=\max_{i^{'}\in M_t,j^{'}\in m_t}(\dot{\tilde\theta}_{i^{'}}(t)-\dot{\tilde\theta}_{j^{'}}(t)). \]
At each $t$, there are six possible combinations for the  indices ($i^{'}$ and $j^{'}$ ) in the upper Dini derivative of the phase diameter function. We discuss each of them below. 
\begin{lemma}\label{lemderi}Assume $K_1=-K_2<0$.
    Let $t_1,t_2$ be any two real  number such that $0\le t_1<t_2$.  
    \begin{enumerate}
        \item[$(1)$] If $(i^{'},j^{'})\equiv (3,1)$ or $(1,2)$ or $(2,3)$,  for all $t\in (t_1, t_2)$, then
    \[\dot{\tilde\theta}_{i^{'}}(t)-\dot{\tilde\theta}_{j^{'}}(t)=-\frac{2K_2}{3}\cos\left(\frac{\mathcal{D}(\tilde\Theta)}{2}+ \frac{\pi}{6}\right)\left[ 2\sin\left(\frac{\mathcal{D}(\tilde\Theta)}{2}+ \frac{\pi}{6}\right)-\cos\left(\tilde{\theta}_k -\frac{\tilde{\theta}_{i^{'}}+\tilde{\theta}_{j^{'}}}{2}\right) \right]; \]
    \item[$(2)$] If $(i^{'},j^{'})\equiv (2,1)$ or $(3,2)$ or $(1,3)$,  for all $t\in (t_1, t_2)$, then 
    \[\dot{\tilde\theta}_{i^{'}}(t)-\dot{\tilde\theta}_{j^{'}}(t)=-\frac{2K_2}{3}\left[ \sin\left(\mathcal{D}(\tilde\Theta)-\frac{\pi}{3}\right)+\cos\left(\tilde{\theta}_k -\frac{\tilde{\theta}_{i^{'}}+\tilde{\theta}_{j^{'}}}{2}\right)\sin\left(\frac{\mathcal{D}(\tilde\Theta)}{2}+\frac{\pi}{3}\right)\right], \]
    \end{enumerate}
    where $k\neq i^{'},j^{'}.$
\end{lemma}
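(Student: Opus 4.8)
The plan is to verify each of the six cases by direct substitution into the translated system \eqref{equtil}, using the constraint $K_1=-K_2$ and elementary sum-to-product trigonometric identities. First I would set up the generic calculation: fix a time $t\in(t_1,t_2)$, write $a=\tilde\theta_{i'}(t)$, $b=\tilde\theta_{j'}(t)$, $c=\tilde\theta_k(t)$, and note that by the choice of $M_t$ and $m_t$ we have $a\geq c\geq b$ and $a-b=\mathcal D(\tilde\Theta(t))$. Then $\dot{\tilde\theta}_{i'}-\dot{\tilde\theta}_{j'}$ is a fixed combination of three sine terms (each $\dot{\tilde\theta}$ has two, and one pair involving the common index $k$ will partially combine). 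The key algebraic move throughout is the identity $\sin(x)-\sin(y)=2\cos\!\big(\tfrac{x+y}{2}\big)\sin\!\big(\tfrac{x-y}{2}\big)$ and its companion $\sin(x)+\sin(y)=2\sin\!\big(\tfrac{x+y}{2}\big)\cos\!\big(\tfrac{x-y}{2}\big)$, which is exactly what produces the $\mathcal D/2$ arguments and the $\cos\big(c-\tfrac{a+b}{2}\big)$ factor on the right-hand sides.

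Next I would split into the two structural families. For case $(1)$, where $(i',j')$ is one of the three ``cyclic-adjacent'' pairs $(3,1),(1,2),(2,3)$: I substitute the relevant two equations of \eqref{equtil}, use $K_1=-K_2$ to make all coefficients equal to $\pm K_2/3$, and collect terms. The two sine terms not involving index $k$ should combine via the difference identity into a single product $-\tfrac{2K_2}{3}\cos(\ldots)\cdot 2\sin(\tfrac{\mathcal D}{2}+\tfrac{\pi}{6})$-type expression after using $a+b$ and the phase-lag $\pi/3$; the two terms involving $k$ combine into the $\cos\big(c-\tfrac{a+b}{2}\big)$ piece. Checking that the internal phase shift lands on $\tfrac{\mathcal D}{2}+\tfrac{\pi}{6}$ in both the outer cosine and the inner sine (and $\tfrac{\pi}{6}=\tfrac12\cdot\tfrac{\pi}{3}$, consistent with halving a $\pi/3$ frustration) is the crux of this family. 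For case $(2)$, where $(i',j')$ is a ``reverse-adjacent'' pair $(2,1),(3,2),(1,3)$: the same substitution is made, but now the frustration signs in the two non-$k$ terms align so that they combine into $\sin(\mathcal D - \tfrac{\pi}{3})$ (note the full $\mathcal D$, not half), while the $k$-terms give $\cos\big(c-\tfrac{a+b}{2}\big)\sin(\tfrac{\mathcal D}{2}+\tfrac{\pi}{3})$; I would double-check the sign bookkeeping carefully here since this is where the asymmetry between the two families shows up.

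I expect the main obstacle to be purely organizational rather than conceptual: there are six sub-cases, and in each one the roles of which oscillator carries which $\pm\pi/3$ frustration term in \eqref{equtil} differ (oscillator $3$ has a symmetric $(K_2,K_2)$ coupling while oscillators $1,2$ have the mixed $(K_2,K_1)$ coupling), so one must be careful that the reduction to the two displayed formulas genuinely holds for each of the three pairs in its family and is not an artifact of checking only one representative. A secondary subtlety is that these identities are claimed to hold for \emph{all} $t\in(t_1,t_2)$ under the hypothesis that $(i',j')$ is \emph{constant} on that interval; since on such an interval $\mathcal D$ is differentiable and the ordering $a\geq c\geq b$ is maintained, the pointwise identity extends automatically, but I would state this explicitly. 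Once the two formulas are established, no further work is needed for this lemma — the monotonicity consequences are deferred to the subsequent analysis of basins of attraction.
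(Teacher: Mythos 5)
Your proposal is correct and matches the paper's proof: the paper likewise substitutes the translated equations \eqref{equtil} with $K_1=-K_2$, pairs the two non-$k$ sine terms and the two $k$-terms, and applies the sum-to-product and double-angle identities to reach the stated formulas (working out only the representative case $(i',j')=(3,1)$ and declaring the others similar). Your extra care in checking all six sub-cases, given the asymmetric role of oscillator $3$, is a sound and slightly more thorough execution of the same argument.
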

\begin{proof}
We only need to prove the result for $(i^{'},j^{'})=(3,1)$, and the proofs for the rest are similar. By the definition of $i'$ and $j'$, $\mathcal{D}(\tilde{\Theta}) = \tilde{\theta}_3-\tilde{\theta}_1$. Then
    \begin{align*}
       &\quad \dot{\tilde{\theta}}_3-\dot{\tilde{\theta}}_1\\
       &=\frac{K_2}{3}\sin(\tilde{\theta}_1-\tilde{\theta}_3-\frac{\pi}{3})+\frac{K_2}{3}\sin(\tilde{\theta}_2-\tilde{\theta}_3+\frac{\pi}{3})-\frac{K_2}{3}\sin(\tilde{\theta}_3-\tilde{\theta}_1+\frac{\pi}{3})-\frac{K_2}{3}\sin(\tilde{\theta}_2-\tilde{\theta}_1-\frac{\pi}{3})\\
       &=-\frac{2K_2}{3}\sin\left(\mathcal{D}(\tilde\Theta)+\frac{\pi}{3} \right)+\frac{2K_2}{3}\cos\left(\frac{\tilde{\theta}_2-\tilde{\theta}_3}{2}+\frac{\tilde{\theta}_2-\tilde{\theta}_1}{2}  \right)\sin\left(-\frac{\mathcal{D}(\tilde\Theta)}{2}+\frac{\pi}{3}\right).
    \end{align*}
    Using 
    $\sin\left(-\frac{\mathcal{D}(\tilde\Theta)}{2}+\frac{\pi}{3}\right)=\cos\left(\frac{\mathcal{D}(\tilde\Theta)}{2}+\frac{\pi}{6}\right) $
    and $\sin 2x=2\sin x\cos x$, we obtain that
    \begin{align*}
        \dot{\tilde{\theta}}_3-\dot{\tilde{\theta}}_1&=-\frac{2K_2}{3}\sin\left(\mathcal{D}(\tilde\Theta)+\frac{\pi}{3} \right)+\frac{2K_2}{3}\cos\left(\frac{\tilde{\theta}_2-\tilde{\theta}_3}{2}+\frac{\tilde{\theta}_2-\tilde{\theta}_1}{2}  \right)\cos\left(\frac{\mathcal{D}(\tilde\Theta)}{2}+\frac{\pi}{6}\right)\\
        &=-\frac{2K_2}{3}\cos\left(\frac{\mathcal{D}(\tilde\Theta)}{2}+ \frac{\pi}{6}\right)\left[ 2\sin\left(\frac{\mathcal{D}(\tilde\Theta)}{2}+ \frac{\pi}{6}\right)-\cos\left(\frac{\tilde{\theta}_2-\tilde{\theta}_3}{2}+\frac{\tilde{\theta}_2-\tilde{\theta}_1}{2}\right) \right].
    \end{align*}
\end{proof}}

The following lemma states that the boundedness of the phase diameter implies the exponential decay for the proper upper bound $\frac{2\pi}{3}$.
\begin{lemma}\label{prelem} 
Assume $K_1=-K_2<0$.
For any $T\in (0,\infty]$, if
    $\sup_{t\in[0,T)}\mathcal{D}(\tilde{\Theta}(t))<\frac{2\pi}{3}$ in system \eqref{equtil},
    then there exists $\lambda>0$ such that 
    $ \mathcal{D}(\tilde{\Theta}(t))\le \mathcal{D}(\tilde{\Theta}(0))e^{-\lambda t},$ for any $ t\in[0,T). $
\end{lemma}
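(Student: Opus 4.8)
The plan is to prove the single differential inequality $D^+\mathcal{D}(\tilde\Theta(t))\le-\lambda\,\mathcal{D}(\tilde\Theta(t))$ for all $t\in[0,T)$, with a constant $\lambda>0$ depending only on $K_2$ and on $D_\infty:=\sup_{t\in[0,T)}\mathcal{D}(\tilde\Theta(t))$, and then to conclude by the Gr\"onwall argument for the upper Dini derivative: if $w(t):=\mathcal{D}(\tilde\Theta(t))e^{\lambda t}$ then $D^+w\le 0$, so the continuous function $w$ is non-increasing and $\mathcal{D}(\tilde\Theta(t))\le\mathcal{D}(\tilde\Theta(0))e^{-\lambda t}$. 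Here $K_1=-K_2<0$ forces $K_2>0$, and the hypothesis gives $D_\infty<\tfrac{2\pi}{3}$ (if $D_\infty=0$ there is nothing to prove). By the formula for $D^+\mathcal{D}$ recalled just before Lemma \ref{lemderi}, at each $t$ the quantity $D^+\mathcal{D}(\tilde\Theta(t))$ equals $\dot{\tilde\theta}_{i'}(t)-\dot{\tilde\theta}_{j'}(t)$ for some $i'\in M_t$, $j'\in m_t$; when $\mathcal{D}(\tilde\Theta(t))>0$ the sets $M_t,m_t$ are disjoint, so $(i',j')$ is one of the six pairs treated in Lemma \ref{lemderi}, and when $\mathcal{D}(\tilde\Theta(t))=0$ the point $\tilde\Theta(t)=(c,c,c)$ is an equilibrium of \eqref{equtil} by translation invariance, so $D^+\mathcal{D}(\tilde\Theta(t))=0$ and the inequality holds trivially there. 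Thus it suffices to bound both expressions in Lemma \ref{lemderi} above by $-\lambda\,\mathcal{D}(\tilde\Theta)$.

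The one elementary input is control of the ``middle'' phase. If $k\notin\{i',j'\}$, then $i'$ being a maximizing and $j'$ a minimizing index gives $\tilde\theta_{j'}\le\tilde\theta_k\le\tilde\theta_{i'}$, whence
\[ \Big|\,\tilde\theta_k-\tfrac{\tilde\theta_{i'}+\tilde\theta_{j'}}{2}\,\Big|\ \le\ \tfrac{\tilde\theta_{i'}-\tilde\theta_{j'}}{2}\ =\ \tfrac{\mathcal{D}(\tilde\Theta)}{2}\ <\ \tfrac{\pi}{3}. \]
Hence $\cos\!\big(\tilde\theta_k-\tfrac{\tilde\theta_{i'}+\tilde\theta_{j'}}{2}\big)\in\big[\cos\tfrac{\mathcal{D}(\tilde\Theta)}{2},\,1\big]$, and — crucially — the two cases of Lemma \ref{lemderi} use the two different endpoints of this interval.

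Abbreviating $D:=\mathcal{D}(\tilde\Theta)\in[0,D_\infty]$, case $(1)$ is handled by noting $\tfrac{D}{2}+\tfrac{\pi}{6}\in[\tfrac{\pi}{6},\tfrac{\pi}{2})$, so $\cos(\tfrac{D}{2}+\tfrac{\pi}{6})\ge\cos(\tfrac{D_\infty}{2}+\tfrac{\pi}{6})>0$, while — using $\cos(\cdot)\le 1$ — the bracket is bounded below by
\[ 2\sin\!\Big(\tfrac{D}{2}+\tfrac{\pi}{6}\Big)-1\ =\ 4\sin\tfrac{D}{4}\,\cos\!\Big(\tfrac{D}{4}+\tfrac{\pi}{6}\Big)\ \ge\ \tfrac{2}{\pi}\,\cos\!\Big(\tfrac{D_\infty}{4}+\tfrac{\pi}{6}\Big)\,D, \]
where $\sin\tfrac{D}{4}\ge\tfrac{2}{\pi}\cdot\tfrac{D}{4}$ on $[0,\tfrac{\pi}{2}]$ and $\cos(\tfrac{D}{4}+\tfrac{\pi}{6})\ge\cos(\tfrac{D_\infty}{4}+\tfrac{\pi}{6})>0$. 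For case $(2)$, using $\cos(\cdot)\ge\cos\tfrac{D}{2}$ and $\sin(\tfrac{D}{2}+\tfrac{\pi}{3})>0$ on $[0,\tfrac{2\pi}{3})$, the bracket is bounded below by
\[ \sin\!\Big(D-\tfrac{\pi}{3}\Big)+\cos\tfrac{D}{2}\,\sin\!\Big(\tfrac{D}{2}+\tfrac{\pi}{3}\Big)\ =\ \tfrac{3}{4}\sin D+\tfrac{\sqrt{3}}{4}\,(1-\cos D)\ \ge\ \tfrac{3}{4}\sin D\ \ge\ \tfrac{3\sin D_\infty}{4D_\infty}\,D, \]
the last step using that $x\mapsto\sin x/x$ is decreasing on $(0,\pi)$. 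Since $-\tfrac{2K_2}{3}<0$, substituting these into Lemma \ref{lemderi} gives $\dot{\tilde\theta}_{i'}-\dot{\tilde\theta}_{j'}\le-\lambda D$ in all six cases with
\[ \lambda\ :=\ \tfrac{2K_2}{3}\,\min\!\left\{\tfrac{2}{\pi}\cos\!\Big(\tfrac{D_\infty}{2}+\tfrac{\pi}{6}\Big)\cos\!\Big(\tfrac{D_\infty}{4}+\tfrac{\pi}{6}\Big),\ \tfrac{3\sin D_\infty}{4D_\infty}\right\}\ >\ 0, \]
which is the required differential inequality.

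The step I expect to be the crux is case $(2)$: the naive estimate $\cos\!\big(\tilde\theta_k-\tfrac{\tilde\theta_{i'}+\tilde\theta_{j'}}{2}\big)\ge 0$ is useless there because $\sin(D-\tfrac{\pi}{3})<0$ for $0<D<\tfrac{\pi}{3}$, so one genuinely needs the sharper bound $\cos(\cdot)\ge\cos\tfrac{D}{2}$ coming from the middle-phase inequality; and even then the positivity of the resulting lower bound (and of $\lambda$) hinges on the \emph{strict} inequality $D_\infty<\tfrac{2\pi}{3}$, which is exactly what keeps $\cos(\tfrac{D_\infty}{2}+\tfrac{\pi}{6})$ and $\cos(\tfrac{D_\infty}{4}+\tfrac{\pi}{6})$ away from $0$ and makes $\tfrac{3}{4}\sin D+\tfrac{\sqrt{3}}{4}(1-\cos D)$ nonnegative on the relevant range. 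The remaining ingredients — the two displayed product-to-sum identities, concavity of $\sin$ on $[0,\tfrac{\pi}{2}]$, monotonicity of $\sin x/x$, and the Gr\"onwall step — are routine, and the delicate endpoint $D=0$ is precisely the equilibrium $\tilde\Theta=(c,c,c)$, where both sides of the inequality vanish, so nothing is lost there.
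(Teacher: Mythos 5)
Your proof is correct and follows essentially the same route as the paper's: in Case (1) of Lemma \ref{lemderi} you bound $\cos\bigl(\tilde\theta_k-\tfrac{\tilde\theta_{i'}+\tilde\theta_{j'}}{2}\bigr)$ above by $1$, in Case (2) you bound it below by $\cos\tfrac{\mathcal{D}}{2}$, you extract a linear-in-$\mathcal{D}$ lower bound on each bracket using the strict gap below $\tfrac{2\pi}{3}$, and you close with Gr\"onwall for the Dini derivative. The only difference is cosmetic — you derive the linear lower bounds via product-to-sum identities, concavity of $\sin$, and monotonicity of $\sin x/x$, where the paper invokes the ready-made inequalities $2\sin(\tfrac{x}{2}+\tfrac{\pi}{6})\ge\tfrac{x}{5}+1$ and $\sin(x-\tfrac{\pi}{3})+\cos\tfrac{x}{2}\sin(\tfrac{x}{2}+\tfrac{\pi}{3})\ge\tfrac{x}{5}$ — and your explicit treatment of the index-switching and $\mathcal{D}=0$ cases is, if anything, a bit more careful than the paper's remark.
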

\begin{proof}
    Let  $\delta$ be an arbitrary positive number such that $\delta\le \frac{2\pi}{3}-\sup_{t\in[0,T)}\mathcal{D}(\tilde{\Theta}(t))$, then  we shall give 
    the estimation for $D^+\mathcal{D}(\tilde{\Theta}(t))$ under this condition.

    For any $t\in [0,T)$, if $D^+\mathcal{D}(\tilde{\Theta}(t))$ corresponds to Case (1)  in   Lemma \ref{lemderi}, then by equations
    $\cos\left(\frac{\mathcal{D}(\tilde\Theta)}{2}+ \frac{\pi}{6}\right)\ge \sin\frac{\delta}{2}$, $ 2\sin(\frac{x}{2}+\frac{\pi}{6})\ge \frac{x}{5}+1, \forall x\in  [0,\pi], $
    and
    {$\cos\left( \tilde{\theta}_k-\frac{\tilde{\theta}_{i^{'}}+\tilde{\theta}_{j^{'}}}{2}\right)\le 1, k\neq i^{'},j^{'}$},
      we obtain that at time $t$,
    \[D^+\mathcal{D}(\tilde{\Theta}(t))\le -\frac{2K_2}{15}\left(\sin\frac{\delta}{2}\right)\mathcal{D}(\tilde{\Theta}(t)).  \]
     If $D^+\mathcal{D}(\tilde{\Theta}(t))$ corresponds to Case (2) in Lemma \ref{lemderi}, then by {$ \cos\left( \tilde{\theta}_k-\frac{\tilde{\theta}_{i^{'}}+\tilde{\theta}_{j^{'}}}{2}\right)\ge \cos\frac{\mathcal{D}(\tilde{\Theta})}{2}, k\neq i^{'},j^{'}$}, and
    $ \sin(x-\frac{\pi}{3})+\cos\frac{x}{2}\sin(\frac{x}{2}+\frac{\pi}{3})\ge \frac{x}{5},\; \forall x\in  [0,\pi], $
      we obtain that at time $t$,
    \[D^+\mathcal{D}(\tilde{\Theta}(t))\le -\frac{2K_2}{15}\mathcal{D}(\tilde{\Theta}(t)).  \]
    Hence, for any $t\in [0,T)$, $D^+\mathcal{D}(\tilde{\Theta}(t))\le -\frac{2K_2}{15}\left(\sin\frac{\delta}{2}\right)\mathcal{D}(\tilde{\Theta}(t))$ always holds.
    The desired result can be obtained by choosing $\lambda=\frac{2K_2}{15}\sin\frac{\delta}{2}$ and applying the Gr\"{o}nwall's inequality.
\end{proof}
\begin{remark}
To apply Lemma \ref{lemderi}, we need the max-min indices
$i',j'$ to remain constant 
over an interval $(t_1,t_2)$.  For strict rigor in the above proof, 
we can partition $[0,T)$ into 
a countable of sub-intervals to satisfy such conditions by the discontinuous location of the index.
\end{remark}

\begin{proposition}\label{ThmStar5}
Assume $K_1=-K_2<0$.
If the initial configurations of system \eqref{equtil} satisfy
     $\mathcal{D}(\tilde\Theta(0))<\frac{2\pi}{3},$ then there exists $\hat{\lambda}>0$ such that
    $\mathcal{D}(\tilde{\Theta}(t))\le \mathcal{D}(\tilde{\Theta}(0))e^{-\hat{\lambda}t},$ for any $ t\ge 0.  $
\end{proposition}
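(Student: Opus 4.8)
The plan is to bootstrap Lemma~\ref{prelem}. That lemma already delivers the desired exponential decay once the \emph{uniform} a priori bound $\sup_{t\in[0,\infty)}\mathcal{D}(\tilde\Theta(t))<\frac{2\pi}{3}$ is available, so the whole task reduces to upgrading the one-time hypothesis $\mathcal{D}(\tilde\Theta(0))<\frac{2\pi}{3}$ to this uniform bound. I will do so by showing that $t\mapsto\mathcal{D}(\tilde\Theta(t))$ is non-increasing as long as it stays below $\frac{2\pi}{3}$; equivalently, that the sublevel set $\{\mathcal{D}<\frac{2\pi}{3}\}$ is positively invariant for \eqref{equtil} and that $\mathcal{D}(\tilde\Theta(t))$ never exceeds $\mathcal{D}(\tilde\Theta(0))$.

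The key observation is that the trigonometric estimates used inside the proof of Lemma~\ref{prelem} are in fact \emph{pointwise} in the current value $\mathcal{D}(\tilde\Theta(t))$ and do not need the supremum bound at all. Indeed, $\cos\!\big(\tfrac{\mathcal{D}}{2}+\tfrac{\pi}{6}\big)=\sin\!\big(\tfrac12(\tfrac{2\pi}{3}-\mathcal{D})\big)$, which is nonnegative precisely for $\mathcal{D}\in[0,\tfrac{2\pi}{3}]$ and vanishes only at the right endpoint; together with $2\sin(\tfrac x2+\tfrac\pi6)\ge\tfrac x5+1$ on $[0,\pi]$ and $\cos(\cdot)\le1$, the Case~(1) expression in Lemma~\ref{lemderi} is $\le0$ for every $\mathcal{D}\in[0,\tfrac{2\pi}{3}]$ (recall $K_2>0$, since $K_1=-K_2<0$). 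Similarly, $\cos(\tilde\theta_k-\tfrac{\tilde\theta_{i'}+\tilde\theta_{j'}}{2})\ge\cos\tfrac{\mathcal{D}}{2}$ and $\sin(x-\tfrac\pi3)+\cos\tfrac x2\sin(\tfrac x2+\tfrac\pi3)\ge\tfrac x5$ on $[0,\pi]$ make the Case~(2) expression $\le-\tfrac{2K_2}{15}\mathcal{D}\le0$ on the same range. Since $D^+\mathcal{D}(\tilde\Theta(t))$ is the maximum over the admissible diameter-realizing index pairs $(i',j')$ of these Case~(1)/Case~(2) quantities, we obtain the pointwise bound $D^+\mathcal{D}(\tilde\Theta(t))\le-\tfrac{2K_2}{15}\sin\!\big(\tfrac12(\tfrac{2\pi}{3}-\mathcal{D}(\tilde\Theta(t)))\big)\mathcal{D}(\tilde\Theta(t))\le0$ at every $t$ with $\mathcal{D}(\tilde\Theta(t))\le\tfrac{2\pi}{3}$.

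To conclude, suppose toward a contradiction that $\mathcal{D}(\tilde\Theta(t_1))\ge\tfrac{2\pi}{3}$ for some $t_1>0$ and set $t_0:=\inf\{t\ge0:\mathcal{D}(\tilde\Theta(t))\ge\tfrac{2\pi}{3}\}$, which lies in $(0,t_1]$ because $\mathcal{D}(\tilde\Theta(0))<\tfrac{2\pi}{3}$ and $\mathcal{D}(\tilde\Theta(\cdot))$ is continuous. Continuity also gives $\mathcal{D}(\tilde\Theta(t_0))=\tfrac{2\pi}{3}$ and $\mathcal{D}(\tilde\Theta(t))\le\tfrac{2\pi}{3}$ throughout $[0,t_0]$, so by the previous paragraph $D^+\mathcal{D}(\tilde\Theta(t))\le0$ on all of $[0,t_0]$. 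Using the elementary fact that a continuous function with nonpositive upper Dini derivative on an interval is non-increasing there — which I would apply on the subintervals of $[0,t_0]$ on which the argmax/argmin index pair is constant (cf.\ the remark after Lemma~\ref{prelem}) and then concatenate — we get $\tfrac{2\pi}{3}=\mathcal{D}(\tilde\Theta(t_0))\le\mathcal{D}(\tilde\Theta(0))<\tfrac{2\pi}{3}$, a contradiction. Hence $\mathcal{D}(\tilde\Theta(t))\le\mathcal{D}(\tilde\Theta(0))<\tfrac{2\pi}{3}$ for all $t\ge0$, so $\sup_{t\in[0,\infty)}\mathcal{D}(\tilde\Theta(t))=\mathcal{D}(\tilde\Theta(0))<\tfrac{2\pi}{3}$, and Lemma~\ref{prelem} applied with $T=\infty$ yields the statement (one may take $\hat\lambda=\tfrac{2K_2}{15}\sin\!\big(\tfrac12(\tfrac{2\pi}{3}-\mathcal{D}(\tilde\Theta(0)))\big)$).

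The step I expect to be the main obstacle is the passage from $D^+\mathcal{D}\le0$ to genuine monotonicity, and more precisely extracting invariance of the \emph{open} set $\{\mathcal{D}<\tfrac{2\pi}{3}\}$ from the behaviour on the \emph{closed} set $\{\mathcal{D}\le\tfrac{2\pi}{3}\}$: the function $\mathcal{D}(\tilde\Theta(\cdot))$ is only piecewise $C^1$, the diameter-realizing indices may jump, and the Case~(1) estimate degenerates exactly at $\mathcal{D}=\tfrac{2\pi}{3}$, where $\cos(\tfrac{\mathcal{D}}{2}+\tfrac{\pi}{6})=0$, so the Dini derivative at the boundary is only known to be $\le0$ rather than strictly negative. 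This is handled by the standard $\varepsilon$-shifted comparison (compare $\mathcal{D}(\tilde\Theta(t))$ with $\mathcal{D}(\tilde\Theta(0))+\varepsilon t$ and let $\varepsilon\downarrow0$) combined with the time-partition device of that remark; crucially, no estimate beyond Lemma~\ref{lemderi} and the inequalities already invoked in the proof of Lemma~\ref{prelem} is required.
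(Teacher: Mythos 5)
Your proof is correct and follows essentially the same continuation/bootstrap strategy as the paper: show that the sublevel set $\{\mathcal{D}<\frac{2\pi}{3}\}$ is positively invariant starting from $\mathcal{D}(\tilde\Theta(0))<\frac{2\pi}{3}$, and then invoke Lemma~\ref{prelem} with $T=\infty$. The only (cosmetic) difference is in how the invariance is propagated: the paper applies Lemma~\ref{prelem} on the maximal interval $[0,T^*)$ so that the exponential decay forces $\mathcal{D}(\tilde\Theta(T^*))\le\mathcal{D}(\tilde\Theta(0))$, contradicting $\mathcal{D}(\tilde\Theta(T^*))=\mathcal{D}(\tilde\Theta(0))+\hat{\delta}$, whereas you re-derive the pointwise sign bound $D^+\mathcal{D}\le 0$ on the closed set $\{\mathcal{D}\le\frac{2\pi}{3}\}$ from Lemma~\ref{lemderi} and argue by monotonicity up to a first crossing time — both routes are valid.
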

\begin{proof}
    Select an arbitrary positive number $\hat{\delta}$  such that $\hat{\delta}<\frac{2\pi}{3}-\mathcal{D}(\tilde\Theta(0))$.
    Define the set
        $\mathcal{T}:=\left\{T>0\mid\mathcal{D}(\tilde\Theta(t))<\mathcal{D}(\tilde\Theta(0))+\hat{\delta} ,\; \forall t\in[0,T) \right\}.$
        Obviously, $\mathcal{T}$ is not a empty set and ${T}^*:=\sup\mathcal{T}$ is well defined. We claim that ${T}^*=\infty$. If this is not true, i.e., ${T}^*<\infty$, then
        \begin{equation}\label{contr2}
             \mathcal{D}(\tilde\Theta(t))<\mathcal{D}(\tilde\Theta(0))+\hat{\delta} ,\; \forall t\in[0,{T}^*)\quad \text{and}\quad \mathcal{D}(\tilde\Theta({T}^*))=\mathcal{D}(\tilde\Theta(0))+\hat{\delta}.
        \end{equation}
        By the first assertion of \eqref{contr2} and  Lemma \ref{prelem}, we have that
        \[ \sup_{t\in[0,{T}^*)} \mathcal{D}(\tilde\Theta(t))\le \mathcal{D}(\tilde\Theta(0))+\hat{\delta}<\frac{2\pi}{3}, \] and there exists a constant $\hat{\lambda}>0$ such that
        $\mathcal{D}(\tilde{\Theta}(t))\le \mathcal{D}(\tilde{\Theta}(0))e^{-\hat{\lambda}t},$ $ t\in[0,{T}^*). $
        We let $t\to {T}^{*-}$, and then 
        $\mathcal{D}(\tilde{\Theta}({T}^*)))\le \mathcal{D}(\tilde{\Theta}(0))e^{-\hat{\lambda}{T}^*} \le \mathcal{D}(\tilde{\Theta}(0)), $
        which contradicts the second assertion of  \eqref{contr2}. So, we conduce that ${T}^*=\infty$. Hence, 
         $ \sup_{t\ge 0} \mathcal{D}(\tilde\Theta(t))\le \mathcal{D}(\tilde\Theta(0))+\hat{\delta}<\frac{2\pi}{3}. $
         Applying Lemma \ref{prelem} again, the desired result is obtained.
\end{proof}

{We now provide the main result of this paper.
\begin{theorem}\label{Thm5Star}
    Assume $K_1=-K_2<0$.
    \begin{enumerate}
        \item[$(1)$] If the initial configurations of system \eqref{mod1} satisfy
    \begin{equation}\label{initial5}
        -\pi<\theta_1(0)-\theta_3(0)<\frac{\pi}{3},\; -\frac{\pi}{3}<\theta_2(0)-\theta_3(0)<\pi,\; -\frac{4\pi}{3}<\theta_1(0)-\theta_2(0)<0,
    \end{equation}
    then the solution $\Theta(t)$ converges exponentially fast to the synchronization mode $\Theta^*_5$.
    \item[$(2)$] If the initial configurations of system \eqref{mod1} satisfy
    \begin{equation}\label{initial6}
        -\frac{7\pi}{3}<\theta_1(0)-\theta_3(0)<-\pi,\; -\pi<\theta_2(0)-\theta_3(0)<\frac{\pi}{3},\; -2\pi<\theta_1(0)-\theta_2(0)<-\frac{2\pi}{3},
    \end{equation}
    then the solution $\Theta(t)$ converges exponentially fast to the synchronization mode $\Theta^*_6$.
    \end{enumerate}
\end{theorem}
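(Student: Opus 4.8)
The plan is to reduce Theorem \ref{Thm5Star} to Proposition \ref{ThmStar5} by showing that the open boxes described by \eqref{initial5} and \eqref{initial6} are exactly the preimages, under the translation $\tilde{\Theta}=\Theta-\Theta^*_5$ (respectively $\tilde{\Theta}=\Theta-\Theta^*_6$), of the set $\{\mathcal{D}(\tilde{\Theta})<\frac{2\pi}{3}\}$. First I would treat part $(1)$. Recall $\Theta^*_5=(0,\frac{2\pi}{3},\frac{\pi}{3})$, so the shifted differences are $\tilde{\theta}_1-\tilde{\theta}_3=\theta_1-\theta_3+\frac{\pi}{3}$, $\tilde{\theta}_2-\tilde{\theta}_3=\theta_2-\theta_3+\frac{\pi}{3}$, and $\tilde{\theta}_1-\tilde{\theta}_2=\theta_1-\theta_2+\frac{2\pi}{3}$. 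Substituting the hypotheses \eqref{initial5} gives $-\frac{2\pi}{3}<\tilde{\theta}_1-\tilde{\theta}_3<\frac{2\pi}{3}$, $0<\tilde{\theta}_2-\tilde{\theta}_3<\frac{4\pi}{3}$, and $-\frac{2\pi}{3}<\tilde{\theta}_1-\tilde{\theta}_2<\frac{2\pi}{3}$ at $t=0$. The claim I must verify is that these three inclusions together imply $\mathcal{D}(\tilde{\Theta}(0))=\max_{i,j}(\tilde{\theta}_i-\tilde{\theta}_j)<\frac{2\pi}{3}$, i.e.\ that all six signed pairwise differences lie strictly below $\frac{2\pi}{3}$ (the maximum is always nonnegative, so only the upper bound matters). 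Three of the six differences are the negatives of the displayed ones, so I also need $\tilde{\theta}_3-\tilde{\theta}_2<\frac{2\pi}{3}$, which follows from $\tilde{\theta}_2-\tilde{\theta}_3>0$; the remaining two upper bounds $\tilde{\theta}_3-\tilde{\theta}_1<\frac{2\pi}{3}$ and $\tilde{\theta}_2-\tilde{\theta}_1<\frac{2\pi}{3}$ need a short argument. For $\tilde{\theta}_2-\tilde{\theta}_1$: write it as $(\tilde{\theta}_2-\tilde{\theta}_3)+(\tilde{\theta}_3-\tilde{\theta}_1)$; the first term is in $(0,\frac{4\pi}{3})$ and the second in $(-\frac{2\pi}{3},\frac{2\pi}{3})$, so the naive sum bound is too weak and I instead use $\tilde{\theta}_2-\tilde{\theta}_1=-(\tilde{\theta}_1-\tilde{\theta}_2)<\frac{2\pi}{3}$ directly from the third inclusion. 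Similarly $\tilde{\theta}_3-\tilde{\theta}_1=-(\tilde{\theta}_1-\tilde{\theta}_3)<\frac{2\pi}{3}$ from the first inclusion. So in fact every one of the six differences has the form $\pm(\text{one of the three displayed quantities})$, and each displayed quantity lies in an interval symmetric about $0$ of half-width at most $\frac{2\pi}{3}$ except $\tilde{\theta}_2-\tilde{\theta}_3\in(0,\frac{4\pi}{3})$ whose negative is in $(-\frac{4\pi}{3},0)<\frac{2\pi}{3}$. Hence $\mathcal{D}(\tilde{\Theta}(0))<\frac{2\pi}{3}$.

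Having established $\mathcal{D}(\tilde{\Theta}(0))<\frac{2\pi}{3}$, Proposition \ref{ThmStar5} gives a constant $\hat{\lambda}>0$ with $\mathcal{D}(\tilde{\Theta}(t))\le\mathcal{D}(\tilde{\Theta}(0))e^{-\hat{\lambda}t}\to 0$. Since $\mathcal{D}(\tilde{\Theta}(t))=\max_{i,j}(\tilde{\theta}_i-\tilde{\theta}_j)$ controls $\max_{i,j}|\tilde{\theta}_i-\tilde{\theta}_j|$ (the diameter is symmetric: $\mathcal{D}\ge|\tilde{\theta}_i-\tilde{\theta}_j|$ for every pair, because both the difference and its negative are among the maximized quantities), all phase differences $\tilde{\theta}_i-\tilde{\theta}_j$ decay exponentially to $0$. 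By translational invariance the mean $\bar{\theta}(t)=\frac13\sum_i\theta_i(t)$ of the original system is conserved (summing the three equations of \eqref{mod1} and using antisymmetry of $\sin$ gives $\frac{d}{dt}\sum_i\theta_i=0$), and the same holds for $\tilde{\Theta}$; call the common value $\bar{\theta}_0$. Then $\tilde{\theta}_i(t)=\bar{\theta}_0+\frac13\sum_{j}(\tilde{\theta}_i-\tilde{\theta}_j)(t)\to\bar{\theta}_0$ exponentially for each $i$, so $\tilde{\Theta}(t)\to(\bar{\theta}_0,\bar{\theta}_0,\bar{\theta}_0)$, i.e.\ $\Theta(t)\to\Theta^*_5+(\bar{\theta}_0,\bar{\theta}_0,\bar{\theta}_0)$, which is $\Theta^*_5$ in the equivalence-class sense of Section \ref{sec2}; and the convergence is exponential because each coordinate difference from the limit is a fixed linear combination of exponentially decaying phase differences. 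This proves part $(1)$.

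For part $(2)$ I would repeat the identical argument with $\Theta^*_6=(0,\frac{4\pi}{3},\frac{5\pi}{3})$ in place of $\Theta^*_5$. The translation $\tilde{\Theta}=\Theta-\Theta^*_6$ gives $\tilde{\theta}_1-\tilde{\theta}_3=\theta_1-\theta_3+\frac{5\pi}{3}$, $\tilde{\theta}_2-\tilde{\theta}_3=\theta_2-\theta_3+\frac{\pi}{3}$, $\tilde{\theta}_1-\tilde{\theta}_2=\theta_1-\theta_2+\frac{4\pi}{3}$; plugging in \eqref{initial6} yields $-\frac{2\pi}{3}<\tilde{\theta}_1-\tilde{\theta}_3<\frac{2\pi}{3}$, $-\frac{2\pi}{3}<\tilde{\theta}_2-\tilde{\theta}_3<\frac{2\pi}{3}$, $-\frac{2\pi}{3}<\tilde{\theta}_1-\tilde{\theta}_2<\frac{2\pi}{3}$, so again $\mathcal{D}(\tilde{\Theta}(0))<\frac{2\pi}{3}$, and Proposition \ref{ThmStar5} (whose statement and proof are symmetric between $\Theta^*_5$ and $\Theta^*_6$, as remarked before its statement — the system \eqref{equtil} for $\Theta^*_6$ has the same structure with the roles of $\pm\frac{\pi}{3}$ adjusted, and Lemmas \ref{lemderi}–\ref{prelem} go through verbatim) then forces exponential convergence to $\Theta^*_6$.

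The only genuinely nontrivial point is the box-to-diameter translation in the first paragraph: one must be careful that the three two-sided inequalities in the hypothesis are not independent constraints but are linked by the identity $(\tilde{\theta}_1-\tilde{\theta}_2)+(\tilde{\theta}_2-\tilde{\theta}_3)+(\tilde{\theta}_3-\tilde{\theta}_1)=0$, so that the feasible region is a slab/hexagonal region in the plane $\{\sum(\text{differences})=0\}$ rather than a genuine cube; the right way to see that it is contained in $\{\mathcal{D}<\frac{2\pi}{3}\}$ is simply the observation that each of the six signed pairwise differences equals $\pm$ one of the three quantities appearing in \eqref{initial5} (resp.\ \eqref{initial6}), and for each of those three quantities both it and its negative are bounded above by $\frac{2\pi}{3}$ under the stated hypotheses. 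I expect the rest — conservation of the mean, passing from diameter decay to coordinatewise exponential convergence, and invoking Proposition \ref{ThmStar5} — to be routine.
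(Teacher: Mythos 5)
Your overall strategy is exactly the paper's: translate by $\Theta^*_5$ (resp.\ $\Theta^*_6$), check that the hypothesis forces $\mathcal{D}(\tilde\Theta(0))<\frac{2\pi}{3}$, and invoke Proposition \ref{ThmStar5}. Your part $(2)$ computation and your discussion of how diameter decay plus conservation of the mean yields coordinatewise exponential convergence are correct, and indeed more careful than the paper's one-line proof. However, there is a concrete error in part $(1)$: with $\tilde\theta_2=\theta_2-\frac{2\pi}{3}$ and $\tilde\theta_3=\theta_3-\frac{\pi}{3}$ one has $\tilde\theta_2-\tilde\theta_3=\theta_2-\theta_3-\frac{\pi}{3}$, not $\theta_2-\theta_3+\frac{\pi}{3}$. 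The hypothesis $-\frac{\pi}{3}<\theta_2(0)-\theta_3(0)<\pi$ therefore gives $\tilde\theta_2-\tilde\theta_3\in(-\frac{2\pi}{3},\frac{2\pi}{3})$, symmetric about $0$ just like the other two differences, and $\mathcal{D}(\tilde\Theta(0))<\frac{2\pi}{3}$ is then immediate.

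As written, your patch does not close the gap that the sign slip opens: from your three stated inclusions ($\tilde\theta_1-\tilde\theta_3\in(-\frac{2\pi}{3},\frac{2\pi}{3})$, $\tilde\theta_1-\tilde\theta_2\in(-\frac{2\pi}{3},\frac{2\pi}{3})$, $\tilde\theta_2-\tilde\theta_3\in(0,\frac{4\pi}{3})$) the needed bound $\tilde\theta_2-\tilde\theta_3<\frac{2\pi}{3}$ does not follow. For instance $\tilde\theta_1-\tilde\theta_3=\frac{\pi}{2}$, $\tilde\theta_1-\tilde\theta_2=-\frac{\pi}{2}$, $\tilde\theta_2-\tilde\theta_3=\pi$ satisfies all three inclusions and the identity $(\tilde\theta_1-\tilde\theta_2)+(\tilde\theta_2-\tilde\theta_3)=\tilde\theta_1-\tilde\theta_3$, yet has $\mathcal{D}=\pi>\frac{2\pi}{3}$. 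Your closing claim that ``for each of those three quantities both it and its negative are bounded above by $\frac{2\pi}{3}$'' is precisely the statement that fails for $\tilde\theta_2-\tilde\theta_3$ under your numbers. Once the sign is corrected, the box \eqref{initial5} maps exactly onto $\{\max_{i,j}|\tilde\theta_i(0)-\tilde\theta_j(0)|<\frac{2\pi}{3}\}$ and the rest of your argument goes through; the theorem itself is not in doubt.
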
}
\begin{proof}
    Part $(1)$: Based on Proposition \ref{ThmStar5} and equations $\theta_1=\tilde{\theta}_1,{\theta}_2=\tilde\theta_2+\frac{2\pi}{3},{\theta}_3=\tilde\theta_3+\frac{\pi}{3},$ we obtain the first assertion.
    The proof of Part $(2)$ is similar to $(1)$. 
\end{proof}

{\section{Conclusion}\label{sec4}}
We   discussed theoretical findings on the stability and the attraction regions of critical points in Kuramoto oscillators, which relate to the nonlinear dynamics of three flames interacting in an isosceles configuration. The parameter of coupling strength can take negative values. The co-existence of stable oscillators was analyzed with the adequate conditions for their attraction basins.  These findings enhance our qualitative understanding of the complex behaviors in Kuramoto oscillators, even in a minimal system comprising only three nodes.



 {\section*{Acknowledgements}}
 This work is partially supported by Hong Kong RGC GRF grants  11308121, 11318522 and 11308323.
 Zhao acknowledges the  supported of the Hong Kong Scholars Scheme (Grant No. XJ2023001), the Natural Science Foundation of China (Grant No. 12201156), the China Postdoctoral Science Foundation (Grant No. 2021M701013). We thank Peng Zhang at City University of Hong Kong for introducing us the background of flame oscillation.

\bibliographystyle{plain}

\bibliography{ref}

\end{document}